\newtheorem{thm}{Theorem}[section]
\newtheorem{cor}[thm]{Corollary}
\newtheorem{conj}[thm]{Conjecture}
\newtheorem{lem}[thm]{Lemma}
\newtheorem{exam}[thm]{Example}
\newtheorem{rem}[thm]{Remark}
\theoremstyle{definition}
\newtheorem{defn}[thm]{Definition}
\numberwithin{equation}{section}
\newcommand{\diag}{\text{diag}}
\newcommand{\R}{\mathbb R}
\newcommand{\e}{\varepsilon}
\newcommand{\p}{\partial}
\newcommand{\comment}[1]{}
\begin{document} 
 \dedicatory{Dedicated to Professor Duong Minh Duc on the occasion of his 70th birthday}

\title[Hadamard-type inequalities]{Hadamard-type inequalities for $k$-positive matrices}
\author{Nam Q. Le}
\address{Department of Mathematics, Indiana University, 831 E 3rd St,
Bloomington, IN 47405, USA}
\email{nqle@indiana.edu}
\thanks{The author was supported in part by the National Science Foundation under grant DMS-2054686.}

\subjclass[2020]{15A15, 15A42}
\keywords{Hadamard's inequality, symmetric $k$-positive matrices, elementary symmetric functions, G\r{a}rding's inequality}


\maketitle

\begin{abstract}
We establish Hadamard-type inequalities for a class of symmetric matrices called $k$-positive matrices for which the $m$-th elementary symmetric functions of their eigenvalues are positive for all $m\leq k$.   These matrices arise naturally in the study of $k$-Hessian equations  
in Partial Differential Equations.
For each  $k$-positive matrix, we show that the sum of its principal minors of size $k$ is not larger than the $k$-th elementary symmetric function of their diagonal entries.
The case $k=n$ corresponds to the classical Hadamard inequality for positive definite matrices. Some consequences are also obtained. 
\end{abstract}

\section{Introduction}

Let $n\geq 2$ and $1\leq k\leq n$. We denote the $k$-th symmetric function of $n$ variables $\lambda=(\lambda_1,\cdots,\lambda_n)\in \R^n$ by
$$S_k(\lambda):=\sum_{1\leq i_1<\cdots<i_k\leq n}\lambda_{i_1}\cdots \lambda_{i_k}.$$
It is convenient to set $$S_0(\lambda)=1.$$
Let $\Gamma_k(n)$ be an open symmetric convex cone in $\R^n$, with vertex at the origin, given by
$$\Gamma_k(n)=\{\lambda=(\lambda_1, \cdots,\lambda_n)\in \R^n\mid S_j(\lambda)>0\quad\forall j=1, \cdots, k\}.$$
The convexity of $\Gamma_k(n)$ is a consequence of G\r{a}rding's theory of hyperbolic polynomials; see Example \ref{Ex}.

Let $M_n(\R)$ be the set of $n\times n$ matrices with real entries. 
If  $A=(a_{ij})_{1\leq i, j\leq n}\in M_n(\R)$ is an $n\times n$ symmetric matrix, we use $\lambda(A)=(\lambda_1, \cdots,\lambda_n)$ to denote  its eigenvalues. For $A\in M_n(\R)$, let $\diag(A)$ be its diagonal matrix:
$$\diag(A)=\text{diag}(a_{11}, \cdots, a_{nn}).$$
\vglue 0.2cm
{\bf Notation.} We use the following notation:
$$S_k(A) =S_k(\lambda(A));$$
$$[n]=\{1, \cdots, n\};\quad J^c=[n]\setminus J\text{ for } J\subset [n].$$
For $J\subset [n]$, we denote by
$A[J]$ the principal submatrix of $A$ of size $|J|$ obtained by deleting the $i$th row and column of $A$, for each $i\not \in J$. 
\vglue 0.2cm
Let $E_k(A)$ be the sum of the principal minors of size $k$ of $A\in M_n(\R)$. Then, by \cite[Theorem 1.2.16]{HJ}, we have
\begin{equation}
\label{SEk}
S_k(A) = E_k(A).
\end{equation}
If $A=(a_{ij})_{1\leq i, j\leq n}\in M_n(\R)$ is positive definite, or equivalently, $\lambda(A)\in \Gamma_n(n)$, then Hadamard's determinant inequality (see, for example, \cite[Theorem 7.8.1]{HJ}) gives
\begin{equation}
\label{SnDA}
S_n(\diag(A))= a_{11}\cdots a_{nn}\geq \det A= S_n(A).
\end{equation}
When $A\in M_n(\R)$ is positive definite, and $1\leq k\leq n$ is fixed, each principal submatrix  of size $k$ of $A$ is also positive definite; thus, we can apply the Hadamard inequality to each of these principal submatrices of $A$ and use (\ref{SEk}) to conclude that 
\begin{equation}
\label{SkDA}S_k(\diag(A)) \geq S_k(A).
\end{equation}
In analogy with the classical Hadamard inequality (\ref{SnDA}), we call (\ref{SkDA}) a Hadamard-type inequality.

In this note, we show that (\ref{SkDA}) holds for a larger class of symmetric matrices, called $k$-positive.
\begin{defn}[$k$-positive matrices] Let $1\leq k\leq n$. A  symmetric $n\times n$ matrix $A\in M_n(\R)$ is call $k$-positive if $\lambda(A)\in\Gamma_k(n)$.
\end{defn}
As will be seen in Example \ref{Pkex}, the set of $k$-positive matrices is a convex cone. This is again a consequence of G\r{a}rding's theory of hyperbolic polynomials.
\vglue 0.2cm

Note that the class of $n$-positive matrices is equal to the class of positive definite matrices. The class of $k$-positive matrices arises naturally in the study of $k$-Hessian equations  
$$S_k(D^2 u)=f$$
in Partial Differential Equations where $D^2 u$ denotes the Hessian matrix of $u$; see \cite{IF} for a survey.
\vglue 0.2cm
Due to the following remark, we will focus on the case $k\geq 3$.
\begin{rem} 
\label{rem12}
Let $A=(a_{ij})_{1\leq i, j\leq n}\in M_n(\R)$ be symmetric.
\begin{enumerate}
\item[(i)] If $k=1$, then
$$S_1(A)= \sum_{i=1}^n \lambda_i(A)=\sum_{i=1}^n a_{ii}= S_1 (\diag(A)).$$
\item[(ii)] If $k=2$, then
\begin{eqnarray*}S_2(A) =E_2(A)&=&\sum_{1\leq i<j\leq n} a_{ii} a_{jj}-\sum_{1\leq i<j\leq n} a^2_{ij}\\
&=&S_2(\diag(A)) -\sum_{1\leq i<j\leq n} a^2_{ij}\leq S_2(\diag(A)).
\end{eqnarray*}
Equality holds if and only if $A$ is diagonal.
\end{enumerate}
\end{rem}
Our main result on Hadamard-type inequalities for $k$-positive matrices states as follows.
\begin{thm}[Hadamard-type inequalities for $k$-positive matrices]
\label{Hnk} 
Let $n\geq k\geq  3$.  Let $A\in M_n(\R)$ be $k$-positive. Then $S_k(\diag(A)) \geq S_k(A)$. Moreover, equality holds if and only if $A$ is diagonal.
\end{thm}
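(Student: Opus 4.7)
The plan is to realize $\diag(A)$ as an average of orthogonal conjugates of $A$, each sharing the same spectrum as $A$, and then to apply the concavity of $S_k^{1/k}$ on the cone of $k$-positive matrices (G\r{a}rding's inequality in matrix form).

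For each sign vector $\e = (\e_1,\ldots,\e_n) \in \{-1,+1\}^n$, set $D_\e := \diag(\e_1,\ldots,\e_n)$ and $A_\e := D_\e A D_\e$. Since $D_\e$ is orthogonal, $A_\e$ has the same eigenvalues as $A$; in particular each $A_\e$ is $k$-positive and $S_k(A_\e) = S_k(A)$. A direct computation gives $(A_\e)_{ij} = \e_i \e_j a_{ij}$, so the elementary identity $\sum_\e \e_i\e_j = 0$ for $i \neq j$ yields
$$\bar A \;:=\; \frac{1}{2^n}\sum_{\e \in \{-1,+1\}^n} A_\e \;=\; \diag(A).$$

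Since the set of $k$-positive matrices is a convex cone (Example \ref{Pkex}), $\bar A$ is itself $k$-positive. G\r{a}rding's inequality --- the concavity of $S_k^{1/k}$ on this cone, a consequence of the hyperbolicity of $S_k$ in the direction of the identity matrix --- then gives
$$S_k^{1/k}(\diag A) \;=\; S_k^{1/k}(\bar A) \;\geq\; \frac{1}{2^n}\sum_{\e} S_k^{1/k}(A_\e) \;=\; S_k^{1/k}(A).$$
Raising to the $k$-th power produces $S_k(\diag A) \geq S_k(A)$.

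For the equality case, suppose $S_k(\diag A) = S_k(A)$, so that equality is attained in the Jensen step above. By the equality clause of G\r{a}rding's inequality (the relation $S_k^{1/k}(X+Y) = S_k^{1/k}(X) + S_k^{1/k}(Y)$ on the $k$-positive cone forces $X$ and $Y$ to be proportional), applied pairwise and iteratively to the $2^n$ summands, all the $A_\e$ must be mutually proportional; combined with $S_k(A_\e) = S_k(A)$ for every $\e$, the proportionality constants equal $1$, so $A_\e = A$ for all $\e$. Taking $\e = \e^{(i)}$ (the vector which is $-1$ in slot $i$ and $+1$ elsewhere), the identity $D_{\e^{(i)}}AD_{\e^{(i)}} = A$ forces $a_{ij} = 0$ for every $j \neq i$; as $i$ is arbitrary, $A$ is diagonal. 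The main obstacle is the equality clause: the classical Minkowski--Hadamard case ($k=n$) is familiar, but the analogous statement for $k < n$ on the $k$-positive cone, together with the promotion from two summands to $2^n$ summands via pairwise iteration, must be invoked carefully from G\r{a}rding's theory of hyperbolic polynomials.
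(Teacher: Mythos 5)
Your proof is correct in outline but takes a genuinely different route from the paper's, and it has one real gap in the equality case. The paper argues by induction on $k$ (base case $k=3$, treated directly in Lemma \ref{Hn3}); the inductive step is Lemma \ref{S3nn}, which shows that zeroing out the off-diagonal entries of a single row and column does not decrease any $S_m$, $m\le k$, provided the complementary principal submatrix is $(k-1)$-positive (a fact supplied by Theorem \ref{iner_thm}), and iterating over rows diagonalizes $A$. Your argument instead writes $\diag(A)$ as the average $\frac{1}{2^n}\sum_\e D_\e A D_\e$ of orthogonal conjugates of $A$ and applies concavity of $S_k^{1/k}$ on the G\r{a}rding cone; that concavity is exactly the paper's Lemma \ref{Glem} combined with the Euler identity $\sum_{i,j}a_{ij}S_k^{ij}(A)=kS_k(A)$. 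Your route is shorter and more conceptual, but leans much more heavily on G\r{a}rding's theory --- in particular on an equality clause that the paper never needs.

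The gap, which you correctly flag, is the assertion that $S_k^{1/k}(X+Y)=S_k^{1/k}(X)+S_k^{1/k}(Y)$ on the $k$-positive cone forces $X\propto Y$. This is \emph{not} automatic for hyperbolic polynomials: for $P(x_1,x_2,x_3)=x_1x_2$ on $\R^3$, the non-proportional points $(1,1,0)$ and $(1,1,5)$ give equality. In general, writing $P(tX+Y)=P(X)\prod_{i=1}^k(t+\mu_i)$, equality reduces (via the equality case of Mahler's superadditivity of the geometric mean) to all $\mu_i$ being equal, i.e.\ to $Y-\mu X$ lying in the \emph{lineality space} of $P$. So the precise input your proof needs is that $P_k$ on symmetric matrices has trivial lineality for $k\ge 2$. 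This is true: by the computation appearing in the proof of Example \ref{Ex}, $P_k(tI_n+Z)=\sum_{i=0}^{k}{n-i\choose k-i}\,t^{k-i}S_i(\lambda(Z))$, so vanishing of all $I_n$-eigenvalues of $Z$ forces $S_1(\lambda(Z))=S_2(\lambda(Z))=0$, hence $\sum_i\lambda_i(Z)^2=S_1^2-2S_2=0$, hence $Z=0$. You should supply this verification (or an explicit citation) rather than a bare appeal to "G\r{a}rding's theory." Once it is in place, the passage from two summands to $2^n$ summands is routine: equality in Jensen for a concave, degree-one homogeneous $f$ forces every summand onto the supporting hyperplane of $f$ at the mean, hence into the pairwise equality case against the mean, hence proportional to it; and $S_k(A_\e)=S_k(A)=S_k(\diag A)$ pins the constant to $1$.
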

A simple corollary of Theorem \ref{Hnk} and Remark \ref{rem12} is the following.
\begin{cor}
\label{cor1}
Let $n\geq k\geq  1$.  Let $A=(a_{ij})_{1\leq i, j\leq n}\in M_n(\R)$ be $k$-positive. Then $\diag(A)$ is $k$-positive. In other words, $(a_{11}, \cdots, a_{nn})\in \Gamma_k(n)$. Moreover, $S_k(\diag(A))\geq S_k(A)$.
\end{cor}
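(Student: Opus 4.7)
The plan is to deduce the corollary by applying Theorem \ref{Hnk} and Remark \ref{rem12} at each level $j=1,\ldots,k$. The key preliminary observation is that $k$-positivity is hereditary downward: if $\lambda(A)\in\Gamma_k(n)$, i.e., $S_i(A)>0$ for all $i\leq k$, then in particular $S_i(A)>0$ for all $i\leq j$ whenever $j\leq k$, so $A$ is also $j$-positive for every $j\in\{1,\ldots,k\}$. This is what allows the prior Hadamard-type statements, which are themselves conditional on the appropriate positivity of the input matrix, to be invoked at each intermediate level.

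For a fixed $j\in\{1,\ldots,k\}$, I would split into cases according to which prior result applies. For $j=1$, Remark \ref{rem12}(i) gives $S_1(\diag(A))=S_1(A)>0$. For $j=2$, Remark \ref{rem12}(ii) gives $S_2(\diag(A))\geq S_2(A)>0$. For $j\geq 3$, Theorem \ref{Hnk}, applied with $k$ replaced by $j$ using the $j$-positivity of $A$ established above, yields $S_j(\diag(A))\geq S_j(A)>0$. Letting $j$ range over $\{1,\ldots,k\}$, I obtain $S_j(\diag(A))>0$ for every $j\leq k$, which is exactly the condition $(a_{11},\ldots,a_{nn})\in\Gamma_k(n)$, so $\diag(A)$ is $k$-positive.

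The final inequality $S_k(\diag(A))\geq S_k(A)$ is simply the case $j=k$ of the argument above. I do not anticipate any genuine obstacle: this corollary is a packaging of prior statements, and the only care required is to note, before applying the inequality at level $j$, that its hypothesis (namely $j$-positivity of $A$) is guaranteed by the $k$-positivity assumption.
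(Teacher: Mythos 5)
Your proof is correct and matches the paper's intended argument: the paper presents this as ``a simple corollary of Theorem \ref{Hnk} and Remark \ref{rem12}'' without writing out the details, and your proposal fleshes out exactly that derivation, applying Remark \ref{rem12} for $j=1,2$ and Theorem \ref{Hnk} for $3\leq j\leq k$ after noting that $k$-positivity implies $j$-positivity for $j\leq k$.
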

\noindent
For $p\in [n]$ and $\lambda =(\lambda_1,\cdots,\lambda_n)\equiv (\lambda_i)_{1\leq i\leq n}\in\R^n$, let us denote the following point in $\R^{n\choose p}$:
 $$\displaystyle \lambda_{[p]}=\left( \lambda_{i_1}+\cdots+\lambda_{i_p}\right)_{1\leq i_1<\cdots<i_p\leq n}.$$
Note that $\lambda_{[1]}=\lambda$. We now state an interesting consequence of Corollary \ref{cor1}.
 \begin{thm}
 \label{pcor}
 Let $A=(a_{ij})_{1\leq i, j\leq n}\in M_n(\R)$ be symmetric. Let $p\in [n]$ and $1\leq k\leq {n\choose p}$. If $\lambda(A)_{[p]} \in \Gamma_k ({n\choose p})$ then $(a_{11},\cdots, a_{nn})_{[p]}\in \Gamma_k ({n\choose p})$ and 
 $S_k((a_{11},\cdots, a_{nn})_{[p]})\geq S_k(\lambda(A)_{[p]}).$
 \end{thm}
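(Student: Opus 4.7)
My plan is to realize the two points $\lambda(A)_{[p]}$ and $(a_{11},\ldots,a_{nn})_{[p]}$, lying in $\R^{{n\choose p}}$, as the eigenvalues and the diagonal entries, respectively, of one and the same symmetric matrix of size ${n\choose p}\times {n\choose p}$. Once such a matrix is in hand, Corollary \ref{cor1} applied to it delivers both conclusions of the theorem at once.

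The matrix to use is the $p$-th additive compound $A^{[p]}$ of $A$, defined on the exterior power $\wedge^p\R^n$ by
\[
A^{[p]}(v_1\wedge\cdots\wedge v_p)=\sum_{j=1}^{p}v_1\wedge\cdots\wedge Av_j\wedge\cdots\wedge v_p,
\]
where $\wedge^p\R^n$ is given the inner product that makes $\{e_{i_1}\wedge\cdots\wedge e_{i_p}\mid 1\leq i_1<\cdots<i_p\leq n\}$ into an orthonormal basis. I would then verify two properties: (a) if $A$ is symmetric, so is $A^{[p]}$, with eigenvalues equal to the ${n\choose p}$ numbers $\lambda_{i_1}(A)+\cdots+\lambda_{i_p}(A)$ with $1\leq i_1<\cdots<i_p\leq n$; and (b) the diagonal entry of $A^{[p]}$ at the basis vector $e_{i_1}\wedge\cdots\wedge e_{i_p}$ equals $a_{i_1i_1}+\cdots+a_{i_pi_p}$. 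For (a), diagonalizing $A$ in an orthonormal basis $\{u_1,\ldots,u_n\}$ produces the orthonormal eigenbasis $\{u_{i_1}\wedge\cdots\wedge u_{i_p}\}$ of $A^{[p]}$ with exactly the claimed eigenvalues, and symmetry of $A^{[p]}$ then follows from the spectral theorem; (b) is a one-line expansion directly from the definition.

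Given (a) and (b), the theorem follows in a single step. The hypothesis $\lambda(A)_{[p]}\in\Gamma_k({n\choose p})$ says precisely that the symmetric matrix $A^{[p]}$ is $k$-positive, so Corollary \ref{cor1} applied to $A^{[p]}$ yields that the diagonal of $A^{[p]}$, which by (b) is $(a_{11},\ldots,a_{nn})_{[p]}$, lies in $\Gamma_k({n\choose p})$, and that $S_k((a_{11},\ldots,a_{nn})_{[p]})\geq S_k(\lambda(A)_{[p]})$. The only mildly non-routine point is the verification of (a) and (b); these are classical properties of additive compound matrices, but since compound matrices are not introduced elsewhere in the paper, I would include a short self-contained derivation rather than rely on an outside reference.
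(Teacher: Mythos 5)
Your proposal is correct and coincides with the paper's own argument: the paper introduces exactly the same matrix (called there the linear derivation $\mathcal{D}_A$ on $\Lambda^p\R^n$, following Harvey--Lawson, which is your additive compound $A^{[p]}$), records the same two facts about its eigenvalues and diagonal, and then applies Corollary \ref{cor1} in a single step. No substantive difference.
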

We deduce from Theorem \ref{Hnk} the following result.
\begin{cor}
\label{cor2}
Let $n\geq k\geq  2$. Let $A=(a_{ij})_{1\leq i, j\leq n}$, and $B=(b_{ij})_{1\leq i, j\leq n}\in M_n(\R)$ be two $k$-positive matrices. Then
$$\sum_{i=1}^n b_{ii} S_{k-1}(A[\{i\}^c])\geq k [S_k(A)]^{\frac{k-1}{k}}  [S_k(B)]^{\frac{1}{k}}.$$
\end{cor}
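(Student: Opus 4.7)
The plan is to recognize the left-hand side as the Frobenius pairing $\mathrm{tr}(B_0\,\nabla S_k(A))$ with $B_0:=\mathrm{diag}(b_{11},\ldots,b_{nn})$, diagonalize $A$ to reduce to the scalar G\r{a}rding inequality on $\Gamma_k(n)$, and invoke Theorem \ref{Hnk} to compensate for replacing $B$ by its diagonal. Two calculus facts set up the pairing. Differentiating $S_k(A)=\sum_{|J|=k}\det A[J]$ in $a_{ii}$ gives $\partial S_k(A)/\partial a_{ii}=S_{k-1}(A[\{i\}^c])$. Writing $A=U\Lambda U^T$ with $\Lambda=\mathrm{diag}(\lambda(A))$, the spectral calculus for symmetric matrices gives $\nabla S_k(A)=U\,\mathrm{diag}(S_{k-1}(\Lambda[\{i\}^c]))\,U^T$. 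Setting $\beta_i:=(U^T B_0 U)_{ii}$, cyclicity of the trace combined with $B_0$ being diagonal yields
\begin{equation*}
\sum_i b_{ii}\,S_{k-1}(A[\{i\}^c])\;=\;\mathrm{tr}\bigl(B_0\,\nabla S_k(A)\bigr)\;=\;\sum_i \beta_i\,S_{k-1}(\Lambda[\{i\}^c]).
\end{equation*}

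Next I would place $\beta:=(\beta_1,\ldots,\beta_n)$ in $\Gamma_k(n)$ and show $S_k(\beta)\geq S_k(B)$. Corollary \ref{cor1} applied to $B$ gives $(b_{11},\ldots,b_{nn})\in\Gamma_k(n)$, so $B_0$ and its conjugate $U^T B_0 U$ (which shares its eigenvalues) are both $k$-positive. A second application of Corollary \ref{cor1} to $U^T B_0 U$ puts $\beta$ in $\Gamma_k(n)$, and Theorem \ref{Hnk} applied first to $U^T B_0 U$ and then to $B$ yields
\begin{equation*}
S_k(\beta)\;\geq\;S_k(U^T B_0 U)\;=\;S_k(B_0)\;=\;S_k(\mathrm{diag}(B))\;\geq\;S_k(B).
\end{equation*}
The scalar G\r{a}rding polarization inequality on $\Gamma_k(n)$, applied to $\lambda(A),\beta\in\Gamma_k(n)$, then gives
\begin{equation*}
\sum_i \beta_i\,S_{k-1}(\Lambda[\{i\}^c])\;\geq\;k\,S_k(A)^{(k-1)/k}\,S_k(\beta)^{1/k}\;\geq\;k\,S_k(A)^{(k-1)/k}\,S_k(B)^{1/k},
\end{equation*}
and chaining with the trace identity from the first step finishes the proof.

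The main obstacle is the alignment problem: G\r{a}rding's inequality naturally acts on eigenvalues in $\Gamma_k(n)$, while the target quantity mixes the diagonal entries of $B$ with principal minors of $A$. Bridging the gap requires both collapsing $B$ to its diagonal $B_0$ (so that the Frobenius pairing becomes a genuine sum over $i$) and rotating into the eigenbasis of $A$; the twofold invocation of Theorem \ref{Hnk}—once on $U^T B_0 U$ to move from $\beta$ to its eigenvalues, and once on $B$ to move from $\mathrm{diag}(B)$ to $B$—is exactly what guarantees that $S_k(\beta)^{1/k}$ continues to dominate $S_k(B)^{1/k}$ after this detour.
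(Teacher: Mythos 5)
Your proof is correct, and it reaches the same conclusion by a genuinely different technical route than the paper. The paper invokes the \emph{matrix} form of G\r{a}rding's polarization inequality (Lemma~\ref{Glem}) directly on the pair $(A,\,D=\diag(B))$, which already identifies $\sum_{i,j} d_{ij}S^{ij}_k(A)$ with $\sum_i b_{ii}S_{k-1}(A[\{i\}^c])$; it then finishes with a single application of Theorem~\ref{Hnk} to pass from $S_k(\diag(B))$ to $S_k(B)$. You instead rotate to the eigenbasis of $A$, use the orthogonal invariance $\nabla S_k(U\Lambda U^T)=U\,\nabla S_k(\Lambda)\,U^T$ together with the trace identity to replace the matrix pairing by the scalar pairing $\sum_i\beta_i\,S_{k-1}(\Lambda[\{i\}^c])$ with $\beta=\diag(U^T\diag(B)\,U)$, and then apply only the \emph{scalar} G\r{a}rding inequality on $\Gamma_k(n)$. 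The price of this reduction is that you need $\beta\in\Gamma_k(n)$ and $S_k(\beta)\geq S_k(B)$, which costs two passes through Corollary~\ref{cor1}/Theorem~\ref{Hnk} (to $U^T\diag(B)\,U$ and to $B$) rather than one. What you gain is a proof that stays entirely at the level of eigenvalues in $\R^n$ and never needs the matrix-level hyperbolicity of $P_k$ on $\R^N$ that underlies Lemma~\ref{Glem}. One small bookkeeping point, shared with the paper's own proof: Theorem~\ref{Hnk} is stated only for $k\geq 3$, so for $k=2$ you should cite Remark~\ref{rem12}(ii) or Corollary~\ref{cor1} instead.
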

The rest of this note is organized as follows. In Section \ref{Hpf}, we prove Theorem \ref{Hnk}. In Section \ref{pcor_sec}, we prove Theorem \ref{pcor}. The proof of Corollary \ref{cor2} will be given in Section \ref{Gsec}. The final Section \ref{HypSec} relates the main results and concepts of this note with hyperbolic polynomials.
\section{Proof of Theorem \ref{Hnk}}
\label{Hpf}
In this section, the entries of $A\in M_n(\R)$ will be denoted by $a_{ij}$ so $A=(a_{ij})_{1\leq i, j\leq n}$.

We start with the following useful expansion.
\begin{lem}
\label{expandlem}
Let $A\in M_n(\R)$ be symmetric.
If $A[\{n\}^c]$ is diagonal, then for $k\geq 2$, we have
\begin{eqnarray*}S_k(A)&=& S_k(\diag(A)) -\sum_{i<n} a^2_{in} \left(\sum_{i_1<\cdots<i_{k-2}\in\{i, n\}^c } a_{i_1 i_1}\cdots a_{i_{k-2} i_{k-2}}\right)\\&\equiv& S_k(\diag(A)) -\sum_{i<n} a^2_{in} S_{k-2}(\diag(A[\{i, n\}^c])).
\end{eqnarray*}
\end{lem}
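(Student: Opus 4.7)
The plan is to derive the identity combinatorially from \eqref{SEk}, which writes $S_k(A)=E_k(A)=\sum_{J\subset[n],\,|J|=k}\det A[J]$. I would split this sum according to whether the distinguished index $n$ lies in $J$ and exploit the diagonal structure of $A[\{n\}^c]$ to evaluate each piece.

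If $n\notin J$, then $A[J]$ is a principal submatrix of the diagonal matrix $A[\{n\}^c]$, so $\det A[J]=\prod_{i\in J}a_{ii}$. If $n\in J$, write $J=J'\cup\{n\}$ with $J'\subset[n-1]$ and $|J'|=k-1$; placing the index $n$ last, the matrix $A[J]$ consists of the diagonal block $D_{J'}=\diag(a_{ii})_{i\in J'}$ bordered by $v_{J'}=(a_{in})_{i\in J'}$ and by $a_{nn}$. Laplace expansion along the last row (equivalently, the bordered-determinant identity $c\det D-v^{T}\mathrm{adj}(D)\,v$, which is a polynomial identity in the entries and hence requires no invertibility of $D_{J'}$) yields
$$\det A[J]=a_{nn}\prod_{i\in J'}a_{ii}-\sum_{i\in J'}a_{in}^{2}\prod_{\substack{j\in J'\\ j\neq i}}a_{jj}.$$

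Now I would sum over all $J$ with $|J|=k$. The diagonal-product terms --- those from subsets $J\subset[n-1]$ of size $k$ together with the $a_{nn}\prod_{i\in J'}a_{ii}$ terms from subsets $J=J'\cup\{n\}$ --- reassemble, by splitting $S_k(\diag(A))$ itself according to whether $n\in J$, into exactly $S_k(\diag(A))$. For the remaining negative contributions, I would swap the order of summation: fixing $i<n$ and letting $J'':=J'\setminus\{i\}$ range over the subsets of $\{i,n\}^{c}$ of size $k-2$, they collapse to
$$\sum_{i<n}a_{in}^{2}\sum_{J''\subset\{i,n\}^{c},\,|J''|=k-2}\prod_{j\in J''}a_{jj}=\sum_{i<n}a_{in}^{2}S_{k-2}(\diag(A[\{i,n\}^{c}])),$$
which is the claimed right-hand side.

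No step is a genuine obstacle; the entire argument is careful bookkeeping with the cofactor expansion. The only point to take care over is the sign in the bordered-determinant formula, which is standard. The convention $S_0=1$ is what makes the identity correct at the endpoint $k=2$, where it recovers the elementary computation in Remark \ref{rem12}(ii) applied to a matrix whose only off-diagonal entries lie in the last row and column.
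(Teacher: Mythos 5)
Your argument is correct and is essentially the paper's proof: both compute the principal $k\times k$ minors by elementary expansion of the determinant, exploiting that the only nonzero off-diagonal entries lie in the last row and column so that (in the paper's language) only the identity permutation and the transpositions $(i\;n)$ contribute. Your splitting of the index sets $J$ according to whether $n\in J$, together with the bordered-determinant identity, is just a slightly more structured packaging of the same bookkeeping.
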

\begin{proof}
Recall that $S_k(A)$ is the sum of the principle minors of size $k$ of $A$. Using the definition of determinant of $k\times k$ matrices together with the fact that $A[\{n\}^c]$ is diagonal, we find
\begin{eqnarray*}
S_k(A)&=& S_k(\diag(A)) \\&&+ \sum_{i<n} a^2_{in}\left(\sum_{i_1<\cdots<i_{k-2}\in\{i, n\}^c }  \text{sign}  \begin{pmatrix}
i & n & i_1& \cdots & i_{k-2} \\
n & i & i_1&\cdots & i_{k-2} 
\end{pmatrix} a_{i_1 i_1}\cdots a_{i_{k-2} i_{k-2}}\right)\\
&=&S_k(\diag(A)) -\sum_{i<n} a^2_{in}\left(\sum_{i_1<\cdots<i_{k-2}\in\{i, n\}^c } a_{i_1 i_1}\cdots a_{i_{k-2} i_{k-2}}\right).
\end{eqnarray*}
Here 
$$ \text{sign}  \begin{pmatrix}
i & n & i_1& \cdots & i_{k-2} \\
n & i & i_1&\cdots & i_{k-2} 
\end{pmatrix}=-1$$
is the sign of the permutation of $k$ numbers $i, n, i_1,\cdots, i_{k-2}$.
\end{proof}
  
Our key lemma in the proof of Theorem \ref{Hnk} is the following.
 \begin{lem}
   \label{S3nn}
   Let $n>k\geq 2$ and let $A\in M_n(\R)$ be symmetric.  Let $j\in [n]$. Assume that $A[\{j\}^c])$ is $(k-1)$-positive\footnote{In this revision of the published version of this article in {\it Linear Algebra Appl.} {\bf 635} (2022), 159-170, the assumption in Lemma \ref{S3nn} was modified to make it invariant under conjugation with orthogonal matrices. All arguments and results remain unchanged.
}. 
 Then
$$ S_k(A)\leq S_k(A[\{j\}^c]) + a_{jj} S_{k-1} (A[\{j\}^c]).$$
Moreover, the equality holds if and only if $a_{ij}=0$ for all $i\neq j$.
 \end{lem}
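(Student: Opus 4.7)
The plan is to reduce to the case $j = n$ by a simultaneous permutation of rows and columns, and then to use an orthogonal change of basis on the first $n-1$ coordinates to diagonalize $A[\{n\}^c]$. Writing $A$ in block form as
$$A = \begin{pmatrix} A[\{n\}^c] & b \\ b^{T} & a_{nn} \end{pmatrix}, \qquad b = (a_{1n},\ldots,a_{n-1,n})^{T},$$
and conjugating by $\tilde{U} = \diag(U, 1)$, where $U$ is orthogonal with $U^{T} A[\{n\}^c] U$ diagonal, produces a symmetric matrix $\tilde{A}$ having the same eigenvalues as $A$, the same $(n,n)$ entry, a diagonal upper-left $(n-1)\times(n-1)$ block with entries $\mu_1,\ldots,\mu_{n-1}$ equal to the eigenvalues of $A[\{n\}^c]$, and last column determined off the diagonal by $U^{T} b$. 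Since $U^{T} b = 0$ if and only if $b = 0$, and since $S_k(A)$, $S_k(A[\{n\}^c])$, $S_{k-1}(A[\{n\}^c])$ are all preserved by this conjugation, I may assume from the outset that $A[\{n\}^c]$ is already diagonal and that $(\mu_1,\ldots,\mu_{n-1}) \in \Gamma_{k-1}(n-1)$.

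In this reduced setup, Lemma \ref{expandlem} applies directly and yields
$$S_k(A) = S_k(\diag(A)) - \sum_{i<n} a_{in}^{2}\, S_{k-2}(\diag(A[\{i,n\}^c])).$$
Splitting off the last variable via the elementary identity $S_k(\mu_1,\ldots,\mu_{n-1},a_{nn}) = a_{nn} S_{k-1}(\mu) + S_k(\mu)$ rewrites the first term as exactly $S_k(A[\{n\}^c]) + a_{nn} S_{k-1}(A[\{n\}^c])$, which is the proposed upper bound. Everything therefore reduces to showing that the correction sum $\sum_{i<n} a_{in}^{2}\, S_{k-2}(\mu^{(i)})$ is non-negative and strictly positive whenever some $a_{in} \neq 0$; here $\mu^{(i)}$ denotes the vector obtained from $\mu = (\mu_1,\ldots,\mu_{n-1})$ by deleting its $i$-th entry.

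The crux is the positivity of the coefficients $S_{k-2}(\mu^{(i)})$. For $k = 2$ this is trivial since $S_0 \equiv 1$. For $k \geq 3$, I would invoke the standard monotonicity property of the G\r{a}rding cones, namely that $\mu \in \Gamma_m(N)$ implies $\mu^{(i)} \in \Gamma_{m-1}(N-1)$ for every index $i$; applied with $m = k-1$ and $N = n-1$, this gives $\mu^{(i)} \in \Gamma_{k-2}(n-2)$ and hence $S_{k-2}(\mu^{(i)}) > 0$. This is the one non-routine ingredient and the main obstacle; it is classical in G\r{a}rding's theory of hyperbolic polynomials and can be proved by a short induction using the identity $\partial_{\mu_i} S_j(\mu) = S_{j-1}(\mu^{(i)})$.

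Once this positivity is in place, the correction term is a linear combination of the squares $a_{in}^{2}$ with strictly positive coefficients, so it is $\geq 0$ with equality if and only if every $a_{in}$ vanishes. This proves the inequality, and retracing the orthogonal change of basis shows that equality in the original $A$ corresponds to $b = 0$; after undoing the initial permutation this becomes $a_{ij} = 0$ for all $i \neq j$, establishing the claimed equality characterization.
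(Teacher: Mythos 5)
Your proof is correct and follows essentially the same route as the paper's. You perform the orthogonal conjugation $\tilde U = \diag(U,1)$ first and then invoke Lemma \ref{expandlem}, whereas the paper presents the diagonal case first and then reduces the general case to it; this is a cosmetic reordering of the same two steps. The one non-routine ingredient you identify, $\mu \in \Gamma_{m}(N) \Rightarrow \mu^{(i)} \in \Gamma_{m-1}(N-1)$, is exactly what the paper uses in the form of Theorem \ref{iner_thm} (applied to $A[\{i,j\}^c]$ as a principal submatrix of the $(k-1)$-positive $A[\{j\}^c]$), and your separate handling of the trivial $k=2$ case via $S_0\equiv 1$ matches the paper as well.
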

 \begin{proof}
 We can assume that $j=n$. Then, for all $i<n$, we have, by Theorem \ref{iner_thm} below, $$S_{k-2}(A[\{i, j\}^c])>0.$$

  {\bf Case 1.} Consider the case $A[\{n\}^c]:=(a_{ij})_{1\leq i, j\leq n-1}$ is diagonal. Then, from Lemma \ref{expandlem}, we have
  \begin{eqnarray*}
  S_k(A[\{n\}^c]) + a_{nn} S_{k-1}(A[\{n\}^c])-S_k(A)&=& S_k(\diag(A))-S_k(A)\\&=&  \sum_{i<n} S_{k-2} (\diag(A[\{i, n\}^c])) a^2_{in}\geq 0.
    \end{eqnarray*}
    Moreover, the equality holds if and only if $a_{in}=0$ for all $i< n$.

 {\bf Case 2.} General case.  We can find an orthogonal matrix $U\in O(n-1)$ such that
  $U^t A[\{n\}^c] U$
  is diagonal. Let $$W= U\bigoplus 1:=
  \left( \begin{array}{cc}
U & 0   \\
0   & 1 \end{array} \right) \in O(n)$$ and $B= (a_{in})_{1\leq i\leq n-1}$. 
  Then $$W^t A W=\left( \begin{array}{cc}
U^t A[\{n\}^c] U & U^t B   \\
 B^t U  & a_{nn} \end{array} \right)$$ has the form considered in {\bf Case 1}. Note that
$S_m(W^t A W)= S_m(A)>0$ for  $1\leq m\leq k.$
Therefore, from {\bf Case 1}, we have
  \begin{eqnarray*}
  S_k(A)= S_k(W^t A W) &\leq& S_k (U^t A[\{n\}^c] U) + a_{nn} S_{k-1}(U^t A[\{n\}^c] U)\\&=& S_k (A[\{n\}^c]) + a_{nn} S_{k-1}(A[\{n\}^c]).
  \end{eqnarray*}
  The equality occurs if and only if $U^t B=0$, or equivalently, $a_{in}=0$ for all $i<n$.
   \end{proof}
   The key assumption in Lemma \ref{S3nn} can be deduced, in many cases, 
from the following result which is a consequence of Sylvestre's criterion established in \cite[Theorem 2.1]{IF}.
\begin{thm}[Theorem 2.1 in \cite{IF}]
\label{iner_thm}
Let $A\in M_n(\R)$ be $k$-positive
  where $k\geq 2$. Then for all $i\in [n]$, we have that $A[\{i\}^c]$ is $(k-1)$-positive.
\end{thm}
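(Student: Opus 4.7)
My plan is to induct on $k \geq 2$, with the boundary case $k = n$ handled separately via the classical fact that a principal submatrix of a positive definite matrix is positive definite; so I restrict to $2 \leq k \leq n - 1$. For the base case $k = 2$, I would show $S_1(A[\{i\}^c]) = \trace(A) - a_{ii} > 0$ directly: since $A$ is $2$-positive, $2 S_2(A) = S_1(A)^2 - \sum_j \lambda_j(A)^2 > 0$ forces $S_1(A) > |\lambda_j(A)|$ for every $j$, hence $S_1(A) > \lambda_{\max}(A) \geq e_i^\top A e_i = a_{ii}$.

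For the inductive step $3 \leq k \leq n - 1$, assume the theorem for $k - 1$ in every dimension and let $A$ be $k$-positive. Then $A$ is also $(k-1)$-positive, and the inductive hypothesis gives that $A[\{i\}^c]$ is $(k-2)$-positive, so only $S_{k-1}(A[\{i\}^c]) > 0$ remains to be checked. I would deform $A$ along $A_s = (1-s) I_n + sA$, $s \in [0, 1]$; by convexity of the $k$-positive cone (Example \ref{Pkex}) each $A_s$ is $k$-positive, and $s \mapsto S_{k-1}(A_s[\{i\}^c])$ is continuous with value $\binom{n-1}{k-1} > 0$ at $s = 0$. Were the conclusion to fail, there would exist a first $s_0 \in (0, 1]$ at which $S_{k-1}(A_{s_0}[\{i\}^c]) = 0$, and the heart of the argument is to rule this out.

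At such $s_0$, I would choose $U \in O(n-1)$ diagonalizing $A_{s_0}[\{i\}^c]$, set $W = U \oplus 1$ (the $1$ in the $i$-th slot), and apply Lemma \ref{expandlem} to $\tilde A = W^\top A_{s_0} W$. Using $S_{k-1}(A_{s_0}[\{i\}^c]) = 0$, the expansion collapses to
\[
S_k(A_{s_0}) = S_k(A_{s_0}[\{i\}^c]) - \sum_{j \neq i} \tilde a_{ji}^{\,2}\, S_{k-2}(\tilde A[\{i, j\}^c]).
\]
Writing $\mu = \lambda(A_{s_0}[\{i\}^c])$, each $\tilde A[\{i, j\}^c]$ is diagonal and its entries form an $(n-2)$-subvector of $\mu$. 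Newton's inequality applied to $\mu \in \R^{n-1}$, using $S_{k-1}(\mu) = 0$ and $S_{k-2}(\mu) > 0$, forces $S_k(A_{s_0}[\{i\}^c]) = S_k(\mu) \leq 0$. The expansion $S_{k-1}(\mu + \eps \mathbf{1}) = (n - k + 1)\eps\, S_{k-2}(\mu) + O(\eps^2) > 0$ for small $\eps > 0$ shows $\mu + \eps \mathbf{1} \in \Gamma_{k-1}(n - 1)$, so the inductive hypothesis applied to the $(k-1)$-positive matrix $\diag(\mu + \eps \mathbf{1})$ yields that every $(n-2)$-subvector of $\mu + \eps \mathbf{1}$ lies in $\Gamma_{k-2}(n - 2)$; letting $\eps \to 0$ gives $S_{k-2}(\tilde A[\{i, j\}^c]) \geq 0$ for every $j \neq i$. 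The right-hand side of the identity above is therefore $\leq 0$, contradicting $S_k(A_{s_0}) > 0$. The principal obstacle is exactly this sign analysis at the putative boundary point $s_0$: one must simultaneously invoke Newton's inequality at the boundary of the G\r{a}rding cone to sign $S_k(\mu)$ and pass to a limit of the inductive hypothesis (via the $\eps$-regularization) to sign each $S_{k-2}(\tilde A[\{i, j\}^c])$.
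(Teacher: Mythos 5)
Your proof is correct, but it takes a genuinely different route from the paper's. The paper proves Theorem \ref{iner_thm} by invoking G\r{a}rding's inequality (Lemma \ref{Glem}): first choosing $D=\diag(\delta_1,\ldots,\delta_n)$ with $\delta_i=1$, $\delta_j\to 0$ for $j\neq i$ to get $S_{k-1}(A[\{i\}^c])\geq 0$, and then ruling out equality via a second clever choice $\delta_1=1$, $\delta_2=\cdots=\delta_k=\e$ (rest zero), which makes the right side of \eqref{Gij2} scale like $\e^{(k-1)/k}$ while the left scales like $\e$, giving a contradiction as $\e\to 0^+$. You instead set up an induction on $k$ (with the $k=n$ case deferred to the classical Hadamard/positive-definite fact and $k=2$ handled by the elementary inequality $S_1(A)>\lambda_{\max}(A)\geq a_{ii}$), and run a continuity-method argument along $A_s=(1-s)I_n+sA$ to reduce to a boundary point $s_0$ where $S_{k-1}(A_{s_0}[\{i\}^c])=0$; at that point you combine Lemma \ref{expandlem} after orthogonal conjugation, Newton's inequality (to force $S_k(\mu)\leq 0$ once $S_{k-1}(\mu)=0$ and $S_{k-2}(\mu)>0$), and an $\e$-regularized application of the inductive hypothesis (to get $S_{k-2}$ of each $(n-2)$-subvector of $\mu$ nonnegative), thereby contradicting $S_k(A_{s_0})>0$. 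The trade-off: the paper's proof is short and conceptually clean but leans on the nontrivial polarization inequality of G\r{a}rding; yours replaces that input by the far more elementary Newton inequality at the cost of an induction, a deformation argument, and careful bookkeeping of dimensions (note you still quietly use convexity of $\Gamma_k$, but that admits elementary proofs independent of G\r{a}rding, e.g.\ \cite{U}). Both avoid the Sylvester-criterion argument of the original reference \cite{IF}. One small stylistic remark: you do not actually need the \emph{first} such $s_0$ — any $s_0\in(0,1]$ with $S_{k-1}(A_{s_0}[\{i\}^c])=0$ (guaranteed by the intermediate value theorem if the conclusion fails) suffices, since every $A_s$ on the segment is $k$-positive.
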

For reader's convenience, we provide a different proof of Theorem \ref{iner_thm} using G\r{a}rding's inequality in Section \ref{Gsec}. 

We begin the proof of Theorem \ref{Hnk} with the case $k=3$.
\begin{lem}
\label{Hn3} Let $n\geq  4$.  Let $A\in M_n(\R)$ be $3$-positive. Then $S_3(\diag(A)) \geq S_3(A)$. Moreover, equality holds if and only if $A$ is diagonal.
\end{lem}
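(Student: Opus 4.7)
The plan is to prove Lemma \ref{Hn3} by applying the one-index estimate of Lemma \ref{S3nn} to every $j\in[n]$ and averaging. The point is that individually the bound $S_3(A)\leq S_3(A[\{j\}^c])+a_{jj}S_2(A[\{j\}^c])$ need not yield $S_3(A)\leq S_3(\diag(A))$, since a $3$-positive $A$ can have negative diagonal entries; but after summing over $j$, the bad terms reassemble into $S_1$ of $(n-2)\times(n-2)$ principal submatrices, which is positive by iterated application of Theorem \ref{iner_thm}.

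Since $A$ is $3$-positive, Theorem \ref{iner_thm} makes $A[\{j\}^c]$ a $2$-positive matrix for each $j$, so Lemma \ref{S3nn} with $k=3$ gives
$$S_3(A)\leq S_3(A[\{j\}^c]) + a_{jj}\,S_2(A[\{j\}^c])\qquad(j\in[n]).$$
I would then sum these $n$ inequalities and use two combinatorial identities. First, each principal $3$-minor of $A$ is contained in $A[\{j\}^c]$ for exactly $n-3$ values of $j$, giving $\sum_{j=1}^n S_3(A[\{j\}^c])=(n-3)S_3(A)$. Second, expanding $S_2(A[\{j\}^c])$ via Remark \ref{rem12}(ii) and regrouping---each triple $a_{aa}a_{bb}a_{cc}$ is produced three times (once per removed index) and each $a_{pq}^2$ is multiplied by $\sum_{j\ne p,q}a_{jj}=S_1(A[\{p,q\}^c])$---yields
$$\sum_{j=1}^n a_{jj}\,S_2(A[\{j\}^c]) = 3\,S_3(\diag(A)) - \sum_{1\leq p<q\leq n} a_{pq}^2\,S_1(A[\{p,q\}^c]).$$

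Combining the two identities and rearranging converts the summed bound into
$$3\,S_3(A) + \sum_{1\leq p<q\leq n} a_{pq}^2\,S_1(A[\{p,q\}^c]) \leq 3\,S_3(\diag(A)).$$
Applying Theorem \ref{iner_thm} twice shows that $A[\{p,q\}^c]=(A[\{p\}^c])[\{q\}^c]$ is $1$-positive, so $S_1(A[\{p,q\}^c])>0$ strictly for every $p<q$. Dropping the nonnegative sum on the left gives $S_3(A)\leq S_3(\diag(A))$. For equality, that sum must vanish, and strict positivity of each $S_1(A[\{p,q\}^c])$ then forces $a_{pq}=0$ for all $p<q$, so $A$ is diagonal; the converse is immediate. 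I expect the main difficulty to be verifying the second combinatorial identity above, which packages most of the content of the argument; once it is in place, everything else---the positivity of $S_1$ on the $(n-2)$-dimensional principal submatrices from iterated Theorem \ref{iner_thm}, the final inequality, and the equality characterization---follows cleanly.
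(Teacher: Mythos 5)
Your proof is correct and follows essentially the same route as the paper: apply Lemma \ref{S3nn} for each $j\in[n]$, sum, use $\sum_j S_3(A[\{j\}^c])=(n-3)S_3(A)$ and the regrouping identity $\sum_j a_{jj}S_2(A[\{j\}^c])=3S_3(\diag(A))-\sum_{p<q}a_{pq}^2\,S_1(A[\{p,q\}^c])$, then invoke Theorem \ref{iner_thm} twice to conclude $S_1(A[\{p,q\}^c])>0$, which both yields the inequality and forces $a_{pq}=0$ in the equality case. The paper's proof of Lemma \ref{Hn3} is exactly this averaging argument, so there is nothing new to compare.
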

 \begin{proof}[Proof] 
Fix $j\in[n]$.
Since $A$ is $3$-positive, we can apply  Theorem \ref{iner_thm} twice to find that if 
$i\neq j$, then $A[\{i, j\}^c]$ is $1$-positive.
Thus
\begin{equation}
\label{1kk}
\sum_{k\in\{i, j\}^c} a_{kk}=S_1 (\diag(A[\{i, j\}^c]))>0.
\end{equation}
From $A[\{j\}^c]$ being $2$-positive and Lemma \ref{S3nn}, we have
$$S_3(A) \leq S_3(A[\{j\}^c]) + a_{jj} S_2 (A[\{j\}^c]).$$
Adding these inequalities, and noting that
$$(n-3)S_3(A)=\sum_{j=1}^n S_3(A[\{j\}^c]), $$
we find
\begin{eqnarray*}3S_3(A) \leq \sum_{i=1}^n a_{ii}  S_2(A[\{i\}^c])
&=& 3S_3 (\text{diag}(A)) - \sum_{i=1}^n \left(  a_{ii}\sum_{i\neq j\neq k\neq i}a^2_{jk}\right)\\
&=& 3S_3(\diag(A)) - \sum_{i<j} \left(a_{ij}^2 \sum_{k\in\{i, j\}^c} a_{kk}\right)\\&\leq& 3S_3(\diag(A))
\end{eqnarray*}
where we used (\ref{1kk}) in the last inequality.
Clearly, equality occurs if and only if $a_{ij}=0$ for all $i\neq j$ or if $A$ is diagonal.
 \end{proof}
 We are now ready to prove Theorem \ref{Hnk}.
 \begin{proof}[Proof of Theorem \ref{Hnk}] 
 As remarked in the introduction, we have $S_2(\diag(A))\geq S_2(A)$ for any symmetric matrix $A\in M_n(\R)$ with equality holding if and only if $A$ is diagonal. We only consider the case $k<n$ since the case $k=n$ is the classical Hadamard inequality.
 
 The proof of the theorem is by induction on $k\geq 3$, the base case being Lemma \ref{Hn3}. Suppose that the theorem is true up to $k\geq 3$. We prove it for $k+1<n$. 
 
 Assume $A\in M_n(\R)$ is $(k+1)$-positive.
 Then, by Theorem \ref{iner_thm}, 
 $A[\{i\}^c]$ is $m$-positive for $1\leq m\leq k$.
 For each $j\in [n]$, let $A^{j,0}$ be the matrix obtained from $A$ by replacing all entries in the $j$-th row and column by $0$, except $a_{jj}$ being kept unchanged. 
 
 {\it Step 1.} We show that $A^{n,0}$ is $(k+1)$-positive. Indeed, from $A[\{i\}^c]$ being $m$-positive
 for all $i\in [n]$, we find that the hypothesis of Lemma \ref{S3nn} is satisfied where $k$ there being replaced by $(m+1)$ here.
  We can then apply Lemma \ref{S3nn} to find that, for $1\leq m\leq k$, we have
 $$S_{m+1} (A^{n, 0})= S_{m+1}(A[\{n\}^c])) + a_{nn} S_m(A[\{n\}^c]))\geq S_{m+1}(A)>0$$
 with equality if and only if $a_{in}=0$ for all $i<n$. This combined with $S_{1} (A^{n, 0})= S_{1}(A)>0$ shows that $A^{n, 0}$ is $(k+1)$-positive . 
 
 {\it Step 2.} Next, for each $i\in[n-1]$, we replace the non-diagonal term in the $i$-th row and column of $A^{n, 0}$ by $0$, we obtain a new $(k+1)$-positive matrix with no less $S_{k+1}$ value. 
 Repeating this process, we obtain the conclusion of the theorem for $k+1$  with equality if and only if $A$ is diagonal.
 \end{proof}
\section{Proof of Theorem \ref{pcor}}
\label{pcor_sec}
In this section, we prove Theorem \ref{pcor}.
 The proof uses ideas from Harvey-Lawson \cite{HL2} to interpret $\lambda(A)_{[p]}$ as eigenvalues of a suitable matrix associated with $A$. We recall this formalism.
 
 Let $Sym^2(\R^n)$ be the space of symmetric endomorphisms of $\R^n$. Fix an orthonormal basis $(e_1,\cdots, e_n)$ of $\R^n$. For $p\in [n]$, let $\Lambda^p \R^n$ be the space of $p$-vectors $v_1\wedge\cdots\wedge v_p$ where $v_i\in\R^n$ for $1\leq i\leq p$. The inner product on $\R^n$ induces an inner product on $\Lambda^p\R^n$. Then, an induced orthonormal basis for $\Lambda^p\R^n$ is $\{e_{i_1}\wedge\cdots\wedge e_{i_p}\}$ where $(i_1,\cdots, i_p)$ runs over all increasing $p$-tuples which are ordered lexicographically.  
 
 For each symmetric matrix $A\in M_n(\R)$, we can view its as a member of $Sym^2 (\R^n)$. We define the linear derivation $\mathcal{D}_A$ of $A$ on $\Lambda^p\R^n$ by
 assigning each $p$-vector $v_1\wedge\cdots\wedge v_p\in \Lambda^p\R^n$ another $p$-vector
 $$\mathcal{D}_A(v_1\wedge\cdots\wedge v_p)= Av_1\wedge\cdots\wedge v_p + v_1\wedge Av_2\wedge\cdots\wedge v_p +\cdots + v_1\wedge\cdots\wedge Av_p\in \Lambda^p\R^n.$$
 Clearly, $\mathcal{D}_A\in Sym^2(\Lambda^p\R^n)$, and $\mathcal{D}_A$ has a matrix representation with respect to the induced basis $\{e_{i_1}\wedge\cdots\wedge e_{i_p}\}$ with matrix entries being linear combinations of the entries of $A$. Moreover,
 \begin{equation}
 \label{diagDA} \diag (\mathcal{D}_A)= \mathcal{D}_{\diag(A)}=\diag \left(a_{i_1 i_1}+\cdots +a_{i_p i_p}\right)_{1\leq i_1<\cdots<i_p\leq n}.
 \end{equation}
 
 In \cite[Lemma 2.5]{HL2}, Harvey and Lawson showed that if $A$ has eigenvalues $\lambda(A)=(\lambda_1,\cdots,\lambda_n)$ with corresponding  eigenvectors $(v_1,\cdots v_n)$, then $\mathcal{D}_A$ has eigenvalues $$\{\lambda_{i_1}+\cdots +\lambda_{i_p}: 1\leq i_1<\cdots<i_p\leq n\},$$ with corresponding eigenvectors $$\{ v_{i_1}\wedge\cdots\wedge v_{i_p}: 1\leq i_1<\cdots<i_p\leq n\}.$$
 Thus, in our notation, 
 
 \begin{equation}
 \label{notaDA}
 \lambda(\mathcal{D}_A)= \lambda(A)_{[p]}\quad \text{and } S_k(\lambda(A)_{[p]})= S_k(\mathcal{D}_A).
 \end{equation}
  \begin{proof}[Proof of Theorem \ref{pcor}] We use the above setup and notation. 
 If $\lambda(A)_{[p]} \in \Gamma_k ({n\choose p})$, then 
 $\lambda(\mathcal{D}_A) \in \Gamma_k \left({n\choose p}\right).$
 By Corollary \ref{cor1}, we then have
 $\diag(\mathcal{D}_A)\in  \Gamma_k \left({n\choose p}\right)$
 and
 $$S_k (\diag(\mathcal{D}_A))\geq S_k(\mathcal{D}_A).$$
 In view of (\ref{diagDA}) and (\ref{notaDA}), we obtain the conclusion of the theorem.
 \end{proof}
\section{Proofs of Theorem \ref{iner_thm} and Corollary \ref{cor2} via G\r{a}rding's inequality}
\label{Gsec}
In the proofs of Theorem \ref{iner_thm} and Corollary \ref{cor2}, 
we will use the following form of G\r{a}rding's inequality \cite{G}.
\begin{lem}[G\r{a}rding's inequality]
\label{Glem}
Suppose that $A=(a_{ij})_{1\leq i, j\leq n}$, and $D=(d_{ij})_{1\leq i, j\leq n}\in M_n(\R)$ are two $k$-positive matrices. Then
\begin{equation}
\label{Gij}
\sum_{i, j=1}^n d_{ij} S^{ij}_k (A)\geq k [S_k(A)]^{\frac{k-1}{k}}  [S_k(D)]^{\frac{1}{k}}
\text
{ where }
S^{ij}_k(A)=\frac{\p}{\p a_{ij}} S_k(A).
\end{equation}
\end{lem}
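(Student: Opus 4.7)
The plan is to derive the stated inequality from the classical fact, originally due to G\r{a}rding, that $F(A) := [S_k(A)]^{1/k}$ is positive, positively $1$-homogeneous, and concave on the open cone of $k$-positive symmetric matrices. Once these three properties of $F$ are in hand, (\ref{Gij}) follows from a short first-variation computation.

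To establish them, I would first check that, regarded as a polynomial on the space of symmetric $n\times n$ matrices, $S_k$ is hyperbolic in the direction of the identity: for every symmetric $A\in M_n(\R)$, the univariate polynomial $t\mapsto S_k(A+tI)$ has only real roots. Writing $P_A(t):=\det(tI+A)=\prod_i(t+\mu_i)$ for the characteristic polynomial of $A$, a generating-function manipulation (substitute $y=x+t$ in $\prod_i(x+\mu_i+t)$ and read off the coefficient of $x^{n-k}$) yields the two identities
$$S_k(A+tI) \;=\; \frac{1}{(n-k)!}\,P_A^{(n-k)}(t) \;=\; \sum_{j=0}^{k}\binom{n-j}{k-j}S_j(A)\,t^{k-j}.$$
Since $P_A$ has only real roots, Rolle's theorem forces every derivative of $P_A$ to have only real roots, so $S_k(A+tI)$ is hyperbolic in $t$. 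G\r{a}rding's theorem then guarantees that the connected component $\Gamma$ of $\{A:S_k(A)>0\}$ containing $I$ is an open convex cone on which $F$ is positive, $1$-homogeneous, and concave. Applying Vieta's formulas to the polynomial displayed above identifies $\Gamma$ with the cone of $k$-positive matrices: nonpositivity of all roots (equivalent to membership in $\Gamma$) forces every coefficient $\binom{n-j}{k-j}S_j(A)$ with $0\le j\le k$ to be positive, which is exactly the condition $\lambda(A)\in\Gamma_k(n)$.

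With concavity in hand, the inequality is immediate. For $k$-positive $A$ and $D$, differentiating the concavity inequality $F(A+s(D-A))\ge(1-s)F(A)+sF(D)$ at $s=0$ gives $\nabla F(A)\cdot(D-A)\ge F(D)-F(A)$, while $1$-homogeneity combined with Euler's identity yields $\nabla F(A)\cdot A=F(A)$. Adding these two relations produces $\nabla F(A)\cdot D\ge F(D)$, where the dot product is the entrywise sum $\sum_{i,j=1}^{n}$. Expanding $\partial F/\partial a_{ij}=\tfrac{1}{k}[S_k(A)]^{1/k-1}S^{ij}_k(A)$ by the chain rule and multiplying through by $k[S_k(A)]^{(k-1)/k}$ reproduces exactly (\ref{Gij}).

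The hard part of this plan is the concavity assertion, namely that the connected component of $\{S_k>0\}$ through $I$ coincides with the cone of $k$-positive matrices and that $F=S_k^{1/k}$ is concave on it. This is the content of G\r{a}rding's original theorem on hyperbolic polynomials, which I would invoke as a black box rather than reprove; the remaining pieces of the argument are routine calculus.
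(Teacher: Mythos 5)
Your argument is correct, and it takes a slightly different path from the paper's. The paper proves Lemma \ref{Glem} in one line by citing G\r{a}rding's polarization inequality (Theorem 5 of \cite{G}): if $M$ is the symmetric $k$-linear polar form of $S_k$, that inequality states $M(A_1,\dots,A_k)\ge \prod_{i}[S_k(A_i)]^{1/k}$ for $A_1,\dots,A_k$ in the G\r{a}rding cone, and specializing to $A_1=D$, $A_2=\cdots=A_k=A$ yields (\ref{Gij}) directly, since $kM(D,A,\dots,A)=\sum_{i,j}d_{ij}S_k^{ij}(A)$. You instead invoke the concavity and $1$-homogeneity of $F=S_k^{1/k}$ on the cone and finish with a first-variation-plus-Euler computation; this is an equivalent formulation of the same G\r{a}rding fact (the concavity of $P^{1/k}$ on the hyperbolicity cone is itself a corollary of the polarization inequality), with the advantage of reducing the black box to a property that is perhaps more commonly remembered. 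Your preliminary verification that $S_k$ is $I$-hyperbolic on symmetric matrices, via repeated differentiation of $\det(tI+A)$ and Rolle's theorem, and your Vieta identification of the corresponding G\r{a}rding cone with the set of $k$-positive matrices, are both correct and essentially reproduce what the paper records later in Section \ref{HypSec}, there by direct citation of \cite{G}. One small imprecision: in the Vieta step, membership in the open cone corresponds to strict negativity (not mere nonpositivity) of all roots of $t\mapsto S_k(A+tI)$, equivalently strict positivity of all $S_j(A)$ for $0\le j\le k$; this is a typo and does not affect the argument.
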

Lemma \ref{Glem} follows from the polarization inequality in \cite[Theorem 5]{G} for the polynomial $S_k(A)$; see also \cite[inequality (3.2)]{L} for a related version when $A$ and $D$ are Hessian matrices of two real-valued functions.
 Note that
\begin{equation}
\label{Siik}
S^{ii}_k(A) = S_{k-1}(A[\{i\}^c]).
\end{equation}
 \begin{proof}[Proof of Theorem \ref{iner_thm} using G\r{a}rding's inequality] If $A\in M_n(\R)$ is $k$-positive then $A$ is $m$-positive for all $1\leq m\leq k$. 
Thus, by an induction argument, it suffices to prove that if $A\in M_n(\R)$ is $k$-positive then $S_{k-1}(A[\{i\}^c])>0$  for all $i\in [n]$. 

Indeed,
if $\delta_i>0$, then $D = \diag(\delta_1,\cdots,\delta_n)$ is $k$-positive, and  we deduce from (\ref{Gij})
\begin{equation}
\label{Gij2}
\sum_{i=1}^n \delta_i S_{k-1} (A[\{i\}^c]))\geq  k [S_k(A)]^{\frac{k-1}{k}} [S_k(D)]^{\frac{1}{k}}. 
\end{equation}
For a fixed $i\in [n]$, letting $\delta_i=1$  and $\delta_j\rightarrow 0$ for $j\neq i$ in (\ref{Gij2}), we discover
$$S_{k-1}(A[\{i\}^c])\geq 0.$$
It remains to prove that $S_{k-1}(A[\{i\}^c]))\neq 0$ for all $i\in [n]$. Assume  that $S_{k-1}(A[\{1\}^c]))=0$. In this case, consider $\delta_1=1,$ $\delta_i=\e>0$ for $i=2,\cdots, k$ and $\delta_i=0$, otherwise. Then $D$ is still $k$-positive since $S_m(D)\geq \e^{m-1}>0$ for all $1\leq m\leq k$. Now,
(\ref{Gij2}) and  the assumption $S_{k-1}(A[\{1\}^c]))=0$ give
\begin{equation}
\label{iiep}
\e\sum_{i=2}^{k} S_{k-1}(A[\{i\}^c]) \geq  k [S_k(A)]^{\frac{k-1}{k}} \e^{\frac{k-1}{k}}. 
\end{equation}
Since $S_k(A)>0$,  by dividing both sides of (\ref{iiep}) by $\e$ and letting $\e\rightarrow 0^{+}$, we obtain
$$\sum_{i=2}^{k} S_{k-1}(A[\{i\}^c])  =\infty,$$
a contradiction.
\end{proof}
\begin{proof}[Proof  of Corollary \ref{cor2}]
Suppose $A, B\in M_n(\R)$ are $k$-positive. By Corollary \ref{cor1}, $D:=\diag(B)$ is $k$-positive. 
Applying (\ref{Gij}) to $A$ and $D= \diag(B)$, and recalling (\ref{Siik}), we find
$$\sum_{i=1}^n b_{ii} S_{k-1} (A[\{i\}^c]))\geq k [S_k(A)]^{\frac{k-1}{k}}  [S_k(\diag(B))]^{\frac{1}{k}}\geq  k [S_k(A)]^{\frac{k-1}{k}}  [S_k(B)]^{\frac{1}{k}} $$ 
where we used Theorem \ref{Hnk} in the last inequality.
\end{proof}
\section{Hyperbolic polynomials and a conjectural inequality}
 \label{HypSec}
 In this section, we state a generalization of Theorem \ref{Hnk} for hyperbolic polynomials. Using the theory of hyperbolic polynomials, we prove the convexity of $\Gamma_k(n)$ and the convexity of the set of $k$-positive matrices.
 \vglue 0.2cm
 First, we recall the concept of hyperbolic polynomials \cite{G} (see also \cite{HL1} for a self-contained account of G\r{a}rding's theory).
 \vglue 0.2cm
 Let $P$ be a homogeneous real polynomial of degree $k$ on $\R^n$. Given $a\in \R^n$, we say that $P$ is {\it $a$-hyperbolic} if $P(a)>0$, and for each $x\in\R^n,$ $P(ta + x)$ can be factored as
 $$P(ta + x) =P(a) \prod_{i=1}^k (t + \lambda_i(P; a,x))\quad \text{for all } t\in \R$$
 where $\lambda_i(P; a, x)$'s ($i=1, \cdots, k$) are real numbers, called {\it $a$-eigenvalues of $x$}.
  \vglue 0.2cm
  We recall the following fundamental theorem of hyperbolic polynomials; see \cite[Theorem 2]{G}.
  \begin{thm} [G\r{a}rding]
  Let $P$ be a homogeneous real polynomial of degree $k$ on $\R^n$. Assume that $P$ is $a$-hyperbolic. Denote the G\r{a}rding cone of $P$ at $a$ to be the set \[\Gamma_a(P)=\{x\in \R^n: \lambda_i(P; a, x)>0\text{ for all } i=1,\cdots, k\}.\]
  Then the following hold:
  \begin{enumerate}
  \item If $b\in \Gamma_a(P)$, then $P$ is $b$-hyperbolic and $\Gamma_a(P)=\Gamma_b(P)$.
  \item $\Gamma_a(P)$ is convex. 
  \end{enumerate}
 \end{thm}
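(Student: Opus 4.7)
The plan is to extract a geometric characterization of $\Gamma_a(P)$ as a connected component of $\{P\neq 0\}$, use it to transfer hyperbolicity from $a$ to $b$, and then deduce convexity as a corollary of (1).

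First I record the key elementary consequences of the defining factorization. Setting $t=0$ yields the identity $P(x)=P(a)\prod_{i=1}^k\lambda_i(P;a,x)$, so $P>0$ on $\Gamma_a(P)$ and $P(x)=0$ iff some $a$-eigenvalue $\lambda_i(P;a,x)$ vanishes. Putting $x=a$ gives $P(ta+a)=P(a)(t+1)^k$, hence $\lambda_i(P;a,a)=1$ for all $i$ and $a\in\Gamma_a(P)$. Because the roots of $P(ta+x)$ in $t$ are real and depend continuously on its coefficients (which are polynomial in $x$), the unordered tuple $\{\lambda_i(P;a,x)\}$ is continuous in $x$, so $\Gamma_a(P)$ is an open cone. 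The computation
\[
P\bigl(ta+(1-s)a+sx\bigr)=P(a)\prod_i\bigl(t+(1-s)+s\,\lambda_i(P;a,x)\bigr),\qquad s\in[0,1],
\]
shows that $\Gamma_a(P)$ is star-shaped at $a$; combined with the first identity it identifies $\Gamma_a(P)$ with the connected component $C_a$ of $\{x\in\R^n:P(x)\neq 0\}$ containing $a$, since along any continuous path in $\{P\neq 0\}$ starting at $a$ the $a$-eigenvalues are continuous and cannot vanish.

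The main technical step is to establish $b$-hyperbolicity of $P$ for every $b\in\Gamma_a(P)$. Positivity $P(b)>0$ is immediate from the identity above. For real-rootedness of $Q(t):=P(tb+x)$ I would deform along $b_u:=(1-u)a+ub\in\Gamma_a(P)$ for $u\in[0,1]$ and track the roots of the polynomial family $Q_u(t):=P(tb_u+x)$, each of degree $k$ with leading coefficient $P(b_u)>0$. Boundedness and continuity of these roots in $\mathbb{C}$ follow from continuity of the normalized coefficients on the compact interval $[0,1]$; they are all real at $u=0$. The only way a pair of roots can leave the real axis as $u$ increases is through a real collision at some $u_*\in(0,1]$, producing a point $y_*:=t_*b_{u_*}+x$ with $P(y_*)=0$ and $\nabla P(y_*)\cdot b_{u_*}=0$. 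Ruling this out is the heart of G\r{a}rding's argument; the cleanest route proceeds via the polarization form $M_P$ of $P$ (the unique symmetric multilinear form with $M_P(x,\ldots,x)=P(x)$), together with the polarization inequality $M_P(c_1,\ldots,c_k)>0$ for $c_i\in\Gamma_a(P)$, which when applied to the identity $\nabla P(y_*)\cdot c = k\,M_P(c,y_*,\ldots,y_*)$ forces transversality. This is the step I expect to be the main obstacle, and I would be prepared to invoke G\r{a}rding's polarization inequality as a black box if a direct proof becomes unwieldy.

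With $b$-hyperbolicity in hand, $\Gamma_b(P)$ enjoys the same connected-component description as $\Gamma_a(P)$; since the segment $[a,b]\subset\Gamma_a(P)\subset\{P\neq 0\}$ realizes $a$ and $b$ in the same component, $\Gamma_a(P)=\Gamma_b(P)$, proving claim~(1). Convexity of $\Gamma_a(P)$ then follows at once: given $b,c\in\Gamma_a(P)$, claim~(1) gives $c\in\Gamma_b(P)$, and star-shapedness of $\Gamma_b(P)$ at $b$ (the first-paragraph argument applied at $b$) yields $[b,c]\subset\Gamma_b(P)=\Gamma_a(P)$.
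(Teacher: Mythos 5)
The paper does not prove this theorem: it cites G\r{a}rding \cite[Theorem 2]{G} and points to Harvey--Lawson \cite[Theorems 3.6 and 5.1]{HL1} for a self-contained proof. So there is no internal proof to compare against; what you have written is an attempt at an independent argument, and the question is whether it is correct.

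Your preliminary observations are all sound: setting $t=0$ gives $P(x)=P(a)\prod_i\lambda_i(P;a,x)$; setting $x=a$ gives $\lambda_i(P;a,a)=1$ so $a\in\Gamma_a(P)$; the eigenvalues are homogeneous of degree one and vary continuously, so $\Gamma_a(P)$ is an open cone; the affine-rescaling identity shows $\Gamma_a(P)$ is star-shaped at $a$; and the identification $\Gamma_a(P)=C_a$ with the connected component of $\{P\neq 0\}$ containing $a$ follows as you say. Your derivation of (1) and (2) from $b$-hyperbolicity plus the component characterization plus star-shapedness is also correct and clean.

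The gap is precisely where you flagged it: the proof of $b$-hyperbolicity. The root-collision argument does not close. Double real roots of $Q_u(t)=P(t b_u+x)$ are entirely possible (e.g.\ $P=\lambda_1\lambda_2$, $x=0$, $Q_u(t)=t^2 P(b_u)$ has a double root at $t=0$ for every $u$), so you cannot rule them out wholesale; you would need to rule out only those collisions after which the pair leaves the real axis, which requires controlling the sign of a discriminant derivative, and your sketch does not do this. The escape route you propose --- invoke the polarization inequality $M_P(c_1,\dots,c_k)>0$ for $c_i\in\Gamma_a(P)$ --- is circular: in \cite{G} that inequality is Theorem~5 and is deduced \emph{from} the hyperbolicity theorem (Theorem~2, the statement you are proving); moreover, even taking it for granted, the point $y_*$ at the hypothetical collision satisfies $P(y_*)=0$, so $y_*$ lies on the boundary of the cone, not in its interior, and the strict polarization inequality is not available there. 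The standard argument (G\r{a}rding, H\"ormander, Harvey--Lawson) replaces the real-parameter deformation by a complex one: for $\mathrm{Im}\,s>0$, track the roots $t$ of $P(sa+tb+x)=0$; show they lie in $\{\mathrm{Im}\,t<0\}$ for $|s|$ large (using that the $a$-eigenvalues of $b$ are positive) and can never touch the real $t$-axis (since that would give $P(sa+y)=0$ with $y$ real and $\mathrm{Im}\,s>0$, contradicting $a$-hyperbolicity); then let $s\to 0^+$ along $i\R$ to get $\mathrm{Im}\,t\le 0$ for roots of $P(tb+x)=0$, and apply the same to $\bar t$ to conclude $\mathrm{Im}\,t=0$. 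That argument is genuinely different from the collision tracking you propose and is the missing ingredient.
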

 A self-contained proof of this theorem of G\r{a}rding can also be found in \cite{HL1} which consists of Theorems 3.6 and 5.1 there.
 
 Suppose now $P$ is $a$-hyperbolic. By G\r{a}rding's theorem, we can define the G\r{a}rding cone of $P$ to be
 $$\Gamma(P)=\{x\in \R^n: \lambda_i(P; a, x)>0\text{ for all } i=1,\cdots, k\},$$
 and $\Gamma(P)$ is independent of $a$.

 \begin{exam}[G\r{a}rding cone and $\Gamma_k(n)$]
 \label{Ex}
The $k$-th elementary symmetric function $S_k(\lambda)$ is a homogeneous real polynomial of degree $k$ on $\R^n$ and it is $\lambda$-hyperbolic at any $\lambda\in \Gamma_k(n)$. Moreover,
$$\Gamma(S_k)=\Gamma_k(n).$$
From the convexity of $\Gamma(S_k)$ due to G\r{a}rding's theorem, we deduce the convexity of $\Gamma_k(n)$
 from the above equality.
  \end{exam}
 \begin{proof}[Proof of the statements in Example \ref{Ex}] By Example 2, p. 959 in \cite {G}, we know that $S_k$ is $a$-hyperbolic where $a= (1, \cdots, 1)\in\R^n$. Thus, for any $x\in \R^n$, we have from the definition of $a$-hyperbolicity that the $a$-eigenvalues $\lambda_i (S_k; a, x)$ are real numbers, for all $i=1,\cdots, k$.

Assume  $x\in\Gamma_k(n)$. Then, from $\lambda_i (S_k; a, x)\in\R$, 
$$S_k(ta + x)=\sum_{i=0}^k {n-i\choose k-i}t^{k-i}S_i(x)= S_k(a)\prod_{i=1}^k (t+ \lambda_i (S_k; a, x))$$
and $S_i(x)>0$ for all $i=0, 1, \cdots, k$, we easily find that $\lambda_i (S_k; a, x)>0$ for all $i=1,\cdots, k$. Hence $x\in \Gamma(S_k)$ from which we deduce that $\Gamma_k(n)\subset\Gamma(S_k)$, and
$S_k$ is $x$-hyperbolic by G\r{a}rding's theorem. 
Recall that we use $\Gamma(S_k)$ to denote the G\r{a}rding cone of $S_k$ at $a=(1,\cdots, 1)$. 

Now, assume $x\in \Gamma(S_k)$. Then, by the definition of $\Gamma(S_k)=\Gamma_a(S_k)$, we have $\lambda_i (S_k; a, x)>0$ for all $i=1,\cdots, k$. Therefore, from the above expansion of $S_k(ta + x)$, we obtain $S_i(x)>0$ for all $i=1,\cdots, k$ which shows that $x\in \Gamma_k(n)$, or $\Gamma(S_k)\subset \Gamma_k(n)$.

Thus, we have $\Gamma(S_k)=\Gamma_k(n)$.
\end{proof}
A different proof of the convexity of $\Gamma_k(n)$ can be found in Section 2 of \cite{U}.
\vglue 0.2cm
Example \ref{Ex} shows that $k$-positive matrices are those having eigenvalues lying in the G\r{a}rding cone of $S_k$. 
 \vglue 0.2cm 
 \begin{exam}[G\r{a}rding cone and the set of $k$-positive matrices]
 \label{Pkex}
 Let $N= \frac{1}{2}n(n+1)$ and let $A= (a_{ij})_{1\leq i, j\leq n} \in M_n(\R)$ be symmetric. We can view $A$ as a point in $\R^N$. Then $P(A)=\det A$ is $A$-hyperbolic for any positive definite matrix $A$. Let $I_n$ be the identity $n\times n$ matrix. Define $P_k$ by
\[
\det (tI_n + A)=P(t I_n + A)= \sum_{k=0}^n t^{n-k} P_k(A) \quad\text{for all } t\in\R.
\]
Then  $P_k$ is a homogeneous polynomial of degree $k$ on $\R^N$; moreover, $P_k$ is $I_n$-hyperbolic. This follows from Example 3 and the discussion at the end of p. 959 in \cite{G}.

Note that \[P_k(A)= S_k(\lambda(A)).\] Arguing as in the proof of statements in Example \ref{Ex}, we have
\[\Gamma(P_k)=\{A\in M_n(\R):\lambda (A)\in\Gamma_k(n)\};\]
See also equation (2.10) in \cite{L}.
From the convexity of $\Gamma(P_k)$ due to G\r{a}rding's theorem, we deduce from the above equality the convexity of the set of $k$-positive matrices.
 
 \end{exam}
  \vglue 0.2cm 
 \begin{exam}
 \label{pexam}
 In many geometric problems (see, for example \cite{HL2, Sh, TW}), the Hessian equation operators $S_k$ are replaced by other hyperbolic polynomials $P$. One example is
 $$\mathcal{P}_p(\lambda)=\prod_{1\leq i_1<\cdots<i_p\leq n} (\lambda_{i_1}+\cdots+\lambda_{i_p}),\text{for }\lambda=(\lambda_1,\cdots,\lambda_n)\in\R^n.$$
 Note that $\mathcal{P}_1= S_n$ while $\mathcal{P}_n= S_1$. Moreover, $\mathcal{P}_{n-1}(\lambda(A))=\det (S_1(A) I_n-A)$.
 \end{exam}
 We note that the statement of Theorem \ref{pcor}, without any appeal to hyperbolic polynomials, is modeled on the hyperbolic polynomial $\mathcal{P}_p$ in Example \ref{pexam}.

 It is of interest to study matrices whose eigenvalues lying in the G\r{a}rding cone of a hyperbolic polynomial $P$ other than $S_k$ and $\mathcal{P}_p$. In this regard, we state the following generalization of Theorem \ref{Hnk}.
\begin{conj}[Hadamard-type inequalities for hyperbolic polynomials] 
\label{Conj1}
 Let $P$ be a homogeneous, real, symmetric, hyperbolic polynomial of degree $k$ on $\R^n$. Let $A\in M_n(\R)$. If $\lambda(A)\in \Gamma(P)$ then $(a_{11},\cdots, a_{nn})\in\Gamma(P)$ and
$$P(a_{11},\cdots, a_{nn})\geq P(\lambda(A)).$$
\end{conj}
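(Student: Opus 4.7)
The plan is to reduce the conjecture to two classical inputs: the Schur--Horn majorization theorem, and G\r{a}rding's concavity theorem for $P^{1/k}$ on its hyperbolicity cone. These are linked by the principle that a symmetric concave function on a symmetric convex set is Schur-concave.

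First, I would invoke the Schur--Horn theorem: for any real symmetric matrix $A\in M_n(\R)$, the diagonal vector $d:=(a_{11},\ldots,a_{nn})$ is majorized by the eigenvalue vector $\lambda(A)$. Equivalently, by Hardy--Littlewood--P\'olya, $d$ lies in the convex hull of the $n!$ permutations of $\lambda(A)$. To deduce $d\in\Gamma(P)$, I would note that since $P$ is symmetric, the cone $\Gamma(P)$ is invariant under coordinate permutations: in the natural setting where $P$ is hyperbolic at the permutation-fixed direction $a=(1,\ldots,1)$, one has $P(ta+\sigma x)=P(\sigma(ta+x))=P(ta+x)$, so $\lambda_i(P;a,\sigma x)=\lambda_i(P;a,x)$ and hence $\sigma x\in\Gamma(P)$ whenever $x\in\Gamma(P)$. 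Combined with G\r{a}rding's convexity of $\Gamma(P)$, the convex combination furnished by Schur--Horn forces $d\in\Gamma(P)$.

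Next, to obtain the inequality $P(d)\geq P(\lambda(A))$, I would invoke the deeper half of G\r{a}rding's theorem: $P^{1/k}$ is concave (and positive) on $\Gamma(P)$. Since $P$ is symmetric, $P^{1/k}$ is a symmetric concave function on the symmetric convex set $\Gamma(P)$, hence Schur-concave by the standard Hardy--Littlewood--P\'olya argument: for $x\prec y$, write $x=\sum_\sigma t_\sigma\, \sigma y$ as a convex combination of permutations; then concavity together with symmetry yield $P^{1/k}(x)\geq\sum_\sigma t_\sigma P^{1/k}(\sigma y)=P^{1/k}(y)$. Applying this to $d\prec\lambda(A)$ and raising to the $k$-th power yields $P(d)\geq P(\lambda(A))$, as desired.

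The main obstacle I foresee is the permutation-invariance of $\Gamma(P)$ used in the second step. A symmetric hyperbolic polynomial can admit several hyperbolicity directions with distinct G\r{a}rding cones, and a permutation $\sigma$ acts by $\Gamma_{\sigma a}(P)=\sigma\,\Gamma_a(P)$; only a permutation-fixed direction (typically $a=(1,\ldots,1)$) guarantees that the cone itself is symmetric. This is the natural setting of Examples \ref{Ex} and \ref{Pkex}, as well as of the polynomial $\mathcal{P}_p$ in Example \ref{pexam}, and should be adopted as a standing hypothesis on $P$ in the conjecture. Once it is granted, the remaining steps --- Schur--Horn, G\r{a}rding's convexity of $\Gamma(P)$, and G\r{a}rding's concavity of $P^{1/k}$ --- assemble cleanly into a proof.
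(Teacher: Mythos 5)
The statement you were asked to prove is Conjecture~\ref{Conj1}; the paper itself offers no proof. So there is nothing in the paper to compare against, and what you have written is, in effect, a candidate resolution of the open question rather than an alternative route to a known result.

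That said, your argument appears to be correct, and your flag about the hyperbolicity direction is not merely a stylistic point but a genuine necessity. For a symmetric hyperbolic polynomial the G\r{a}rding cone is only permutation-invariant when the chosen hyperbolicity direction is itself permutation-fixed, and without that the conjecture is false. Take $P=S_3$ on $\R^3$ with the cone $\Gamma_{(1,-1,-1)}(P)=\{x_1>0,\ x_2<0,\ x_3<0\}$, and the symmetric matrix
\[
A=\begin{pmatrix}0&1&0\\ 1&0&0\\ 0&0&-1\end{pmatrix},
\]
whose eigenvalues $(1,-1,-1)$ lie in this cone. Its diagonal $(0,0,-1)$ fails to lie in the cone, and $P(0,0,-1)=0<1=P(1,-1,-1)$. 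So the standing hypothesis that $\Gamma(P)$ is the component containing $(1,\dots,1)$ --- which is the component implicitly used in Examples~\ref{Ex}, \ref{Pkex}, \ref{pexam} --- must indeed be built into the conjecture. Under that reading, each step you take is sound: Schur's half of Schur--Horn gives $d=(a_{11},\dots,a_{nn})\prec\lambda(A)$; Birkhoff/Hardy--Littlewood--P\'olya writes $d$ as a convex combination of permutations of $\lambda(A)$; the symmetry and G\r{a}rding-convexity of $\Gamma(P)$ then place $d\in\Gamma(P)$; and the concavity of $P^{1/k}$ on $\Gamma(P)$ (a standard consequence of \cite[Theorem 5]{G}), combined with symmetry, makes $P^{1/k}$ Schur-concave, yielding $P(d)\geq P(\lambda(A))$.

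It is worth noting that this route, applied to $P=S_k$, also furnishes a short alternative proof of Theorem~\ref{Hnk}, entirely bypassing the paper's inductive mechanism (Lemmas~\ref{expandlem}, \ref{S3nn}, \ref{Hn3}); the trade-off is that you invoke heavier classical machinery (Schur--Horn and Birkhoff) where the paper stays with elementary determinant expansions. What your argument does \emph{not} recover without further work is the characterization of equality cases ($A$ diagonal) that the paper obtains for $S_k$, since Schur-concavity alone only gives the inequality; pinning down equality would require examining the strict-concavity locus of $P^{1/k}$ and the structure of the doubly stochastic matrix arising from the eigenvector matrix.
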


{\bf Acknowledgements.} The author warmly thanks the referee for providing constructive comments that help improve the exposition of this note. The author is grateful to Trieu Le for useful suggestions.

\end{document}